\documentclass[a4paper,10pt]{article}

\usepackage[english]{babel}
\usepackage{amsmath,amssymb,amsthm}
\usepackage{mathrsfs}

 \usepackage[noadjust]{cite} 
\usepackage{cancel}
\usepackage{dashbox}
\usepackage{xcolor}
\newcommand{\E}[1]{\mathrm{E} \Big[ #1 \Big]}

\RequirePackage{geometry}
\geometry{a4paper,portrait,left=3cm,right=3cm,top=3.5cm,bottom=3.5cm}



\def\C{\mathcal{C}}

\newtheorem{theorem}{Theorem}
\newtheorem{proposition}[theorem]{Proposition}

\theoremstyle{definition}
\newtheorem{definition}[theorem]{Definition}
\newtheorem{remark}[theorem]{Remark}

\author{%
	Francesco C. De Vecchi\footnote{University of Bonn, Germany
		- {	fdevecchi@uni-bonn.de }}
	\and 
	Luca Giordano\footnote{University of Milano, Italy
		- {luca.giordano,daniela.morale,stefania.ugolini@unimi.it}}  
	\and Daniela Morale\footnotemark[3] \and Stefania Ugolini\footnotemark[3]
	}

\title{A note on the continuity in the Hurst index of the solution \\of rough differential equations driven \\by a fractional Brownian motion}

\begin{document}
 
\maketitle

\abstract{
Within the rough path framework	 we prove the continuity of the solution to random differential equations  driven by fractional Brownian motion with respect to  the Hurst parameter $H$  when $H\in (1/3,1/2]$. 
}

\section{Introduction}

The importance of the study of stochastic equations driven by a fractional Brownian motion with parameter $H\in(0,1)$ naturally arises from the observation of many phenomena for which the assumption of independence of increments which is intrinsic, for example in the case of the standard Brownian motion, cannot be supposed (\cite{Hurst}). Indeed, in biology, meteorology, telecommunications, queueing
theory  and finance evidence of memory and autocorrelation effects are shown (\cite{giordano_morale,ionescu_2017_fBm_biology,rostek_2013_fBm_finance}). The estimation of $H$ is very important, since it determines the magnitude of the self-correlation of the noise in the models.
 As emphasized in \cite{Jolis}, not only one has to deal with the problem of the estimation of the Hurst parameter $H$ of the noise, as in \cite{kubilius_estimation_H,mielniczuk_estimation_H,nualart}, but one
needs to check that the model does not exhibit a large sensitivity with respect to the values of $H$.   Hence, the study of the continuity problem is important   in the case of  both time (SDE) and time-space (SPDE) stochastic differential equations driven by fractional noises, and it is  a very interesting problem not only from a theoretical point of view, but also in the modeling applications (\cite{giordano:bernoulli_2019,Hairer book,Friz book,RiTa16,RiTa17}). 

\medskip

Here we investigate a continuity problem for a stochastic differential equation (SDE) driven by a fractional Brownian motion $Y^H$ in the rough paths theory setting (\cite{Gubinelli 2004,{Hairer book},{Friz book},{Friz paper}}). In particular, the central object is the following equation
\begin{equation}
  d X_t  = \alpha  (X_t ) d t +   \beta  (X_t)
  \circ d Y_t, \label{eq:general}
\end{equation}
where $Y : [0, T] \rightarrow \mathbb R^d$ is a driving signal, $X : [0; T] \rightarrow \mathbb R^n$, $\alpha : \mathbb R^n \rightarrow \mathbb R^n$, and $\beta:\mathbb R^n \rightarrow \mathbb R^{n \times d}$ are   smooth functions. As usual the solution process $\{X_t\}_{t\in[0,T]}$ has to be interpreted  in the integral form, given an initial datum $X_0$ , i.e.
\begin{equation}
  X_t  = X_0+ \int_0^t \alpha  (X_t ) d t +  \int_0^t  \beta  (X_t)
\circ d Y_t, \label{eq:general_integral_form}
\end{equation}
where the integral has to be understood in the sense of canonical rough integral. We recall that integration in the rough path sense involves functions with low regularity, in particular which are H\"older continuous.

\smallskip
We consider the particular case in which the noise is given by a fractional Brownian motion, that is the process $Y$ in \eqref{eq:general} is  a fractional Brownian motion  $W^H$ with Hurst parameter $H\in(0,1)$.
The continuity of solution of stochastic differential equations driven by a fractional Brownian motion  and its functional, linear and not linear, has been already investigated in \cite{RiTa16,RiTa17}, in the case   $H\downarrow \frac12$. We recall that when $H>1/2$   the  integral in equation \eqref{eq:general_integral_form} is a pathwise Stieltjes integral in the sense of Young (\cite{young}). In the applications, however, the estimation  of $ H$  shows that it may take values less then $1/2$ (\cite{giordano_morale}). In the present work we consider the case $H\rightarrow  H_\infty\in \left(\frac13, \frac12 \right]$  and we specialize to the case of fractional Brownian motion some convergence results obtained for a more general class of noise in \cite{Friz book,Friz paper}. The integral is understood within a rough path approach and the weak convergence is considered with respect to the $p$-variation topology.  
\medskip
 
The result is precisely the following: for $H\in \left(\frac 13, \frac 12\right]$ let us consider the solution $X^H$ of \eqref{eq:general} where $Y\equiv W^H$, which defines a probability distribution on the space $C^{1/3}([0,T])$ of $\frac 13$-H\"older continuous functions. We show that whenever $H\to H_{\infty}\in \left(\frac 13, \frac 12\right]$, it holds that $X^{H}\xrightarrow{d}X^{H_{\infty}}$, where $\xrightarrow{d}$ denotes the convergence in distribution on $C^{1/3}([0,T])$.

\smallskip

The proof   relies on the observation that the  solution operator which maps the lift $(W^H,\mathbb{W}^H)$ of  the noise $W^H$ into the solution $X^H$  can be made continuous (\cite{Hairer book,Friz book} ), so that 
 it is sufficient to show that $(W^H,\mathbb{B}^H)\to (B^{H_{\infty}},\mathbb{B}^{H_{\infty}})$ and to exploit the continuity of the solution map to deduce that $X^H\to X^{H_\infty}$. In the present work, we prove the continuity of the lift by following the standard scheme, that is by first  establishing   the tightness property and then by  identifying the limit. Our main contribution is  the proof of the tightness  in the specific case of the fractional Brownian motion taking advantage of some fundamental results of the rough path theory (\cite{Friz book}).

\section{H\"older spaces and lifted paths spaces}
Let us recall now the main functional spaces useful within the rough path theory.

\begin{definition}\label{def:mathcal_C_alpha}
Let $\alpha>0$. Given a Banach space $(E,|\cdot|_E)$, a function $Y:[0,T]\rightarrow E$  is a \emph{$\alpha$ -H\"older continuous} function if the seminorm
 \begin{equation}
\|Y \|_{\alpha} := \mathop{\sup}_{t\not= s}\frac{|Y_t-Y_s|_E}{|t-s|^{\alpha}}
\label{eq:hoelder_continuity}
 \end{equation}
 is finite.
    Let $\mathcal{C}^{\alpha}([0,T];E)$ be the space of all     $\alpha$ -H\"older continuous functions from $[0,T]$ into $E$.
    A norm on $\mathcal{C}^{\alpha}$ is define as follows
     \begin{equation}
     \|Y\|_{\mathcal{C}^{\alpha}}  = |Y_0|_E+ \|Y \|_{\alpha} .
     \label{eq:C_alpha_norm}
     \end{equation}
\end{definition}
We may extend the space introduced in Definition \ref{def:mathcal_C_alpha} to the functions defined on $[0,T]^2$.
\begin{definition}\label{def:mathcal_C_alpha_2}
    Let $\alpha>0$. Given  a Banach space $(E,|\cdot|_E)$, we define the space $\mathcal{C}^{\alpha}_2$ as the set of functions   $W:[0,T]^2\rightarrow E$  such that the seminorm
    \begin{equation}
    \|W \|_{\mathcal{C}_2^\alpha} := \mathop{\sup}_{t\not= s}\frac{|W(s,t)|_E}{|t-s|^{\alpha}} 
    \label{eq:hoelder_continuity_2}
    \end{equation}
    is finite.
\end{definition}

\begin{definition}\label{def:mathcal_C_alpha+mathcal_C_alpha_2}
    Let $\alpha>0$. Given  a Banach space $(E,|\cdot|_E)$, the  vector space $\mathcal{C}^{\alpha}\oplus \mathcal{C}^{2\alpha}_2$ is the set of the pair functions $(X,W)$  with $X:[0,T]\rightarrow E$  and $W:[0,T]^2\rightarrow E$,   endowed by the norm
    \begin{equation}
    \|(X,W) \|_{\mathcal{C}^{\alpha}\oplus \mathcal{C}^{2\alpha}_2} :=  \|X\|_{\mathcal{C}^{\alpha}} +  \|W \|_{\mathcal{C}_2^{2\alpha}}.
    \label{eq:norm_C_alpha+C_2alpha}
    \end{equation}
Such a space is a Banach space.
\end{definition}

The rough path may be seen  as a subspace of the Banach space given in Definition \ref{def:mathcal_C_alpha+mathcal_C_alpha_2}.

\begin{definition}\label{def:rough_path}

    Let $\alpha \in \left(\frac{1}{3},\frac{1}{2}\right]$.
An \emph{$\alpha$-H\"older   rough path}
    is a pair of functions
$(X,\mathbb X)\in \mathcal{C}^{\alpha}([0,T],E)\oplus\mathcal{C}_2^{2\alpha}([0,T]^2,E) $
    such that
    the so called Chen's relation is satisfied, i.e. for any $s,u,t \in [0,T]$,
    \begin{equation}\label{eq:chen}
    \mathbb X_{s,t}-\mathbb X_{s,u}-\mathbb X_{u,t}  =(X_u-X_s)\otimes (X_t-X_u).
    \end{equation}
    We denote by    $  \mathscr{C}^{\alpha}$ the subspace of $\mathcal{C}^{\alpha}([0,T],E)\oplus\mathcal{C}_2^{2\alpha}([0,T]^2,E)$ such that the Chen's relation
    \eqref{eq:chen} is satisfied, endowed by the distance
    \begin{eqnarray}\label{eq:distance_rough_path}
    \rho_{\alpha}\left((X,\mathbb X),(Y,\mathbb Y)\right)&=&\|X-Y \|_\alpha+ \|\mathbb X-\mathbb Y \|_{\mathcal{C}^{2\alpha}_2}
    \\&=& \nonumber\mathop{\sup}_{t\not= s}\frac{|X_t-X_s-(Y_t-Y_s)|_E}{|t-s|^{\alpha}} +\mathop{\sup}_{t\not= s}\frac{|\mathbb X_{{s,t}}-\mathbb Y_{{s,t}}|_E}{|t-s|^{2\alpha}}.
    \end{eqnarray}

\end{definition}

\medskip

\begin{remark}
    \label{rem: C^alpha is not a vector space}
    The space $\mathscr{C}^\alpha$ is a subset of the vector space $\mathcal{C}^\alpha \oplus \mathcal{C}^{2\alpha}_2$, but it is not a linear subspace, due to the non-linear scaling given by \eqref{eq:chen}. In detail, for $(X,\mathbb X)\in \mathscr{C}^\alpha$ and $\lambda\in \mathbb R$ we have that
\begin{eqnarray*}
    \lambda \mathbb X_{s,t}-\lambda \mathbb X_{s,u}-\lambda \mathbb X_{u,t}&=& \lambda( \mathbb X_{s,t}- \mathbb X_{s,u}-\mathbb X_{u,t})\\
    &=& \lambda((X_u-X_s)\otimes (X_t-X_u))\\
    &\neq& \lambda^2 ((X_u-X_s)\otimes (X_t-X_u))\\
    &=& (\lambda X_t-\lambda X_u) \otimes (\lambda X_u-\lambda X_s).
    \end{eqnarray*}
    Hence, the Chen's relation is not satisfied by $\lambda(X,\mathbb X)$, except for $\lambda=0,1$. On the contrary,  if $(X,\mathbb X)\in \mathscr{C}^\alpha$, then $(\lambda X, \lambda^2 \mathbb X)$ satisfied the Chen relation \eqref{eq:chen}.
\end{remark}
 The non-linear scaling property  given by $(X,\mathbb X)\to (\lambda X, \lambda^2 \mathbb X)$ suggests the definition of the following quantity, which is homogeneous with respect to \eqref{eq:chen}.
 \begin{definition}
    We define on $\mathscr{C}^\alpha$ the $\alpha$\textit{-H\"older rough path norm} as the quantity given by
    \begin{equation}
    \label{eq: rough path homogeneous norm}
    ||(X,\mathbb X)||_{\mathscr{C}^\alpha}:=||X||_\alpha+ \sqrt{||\mathbb X||_{C_2^{2\alpha}}}.
    \end{equation}
 \end{definition}
 \begin{remark}
    The quantity $  ||(X,\mathbb X)||_{\mathscr{C}^\alpha}$ is not a norm in the usual sense, because $||\lambda (X,\mathbb X)||_{\mathscr{C}^\alpha}\neq|\lambda|\cdot ||(X,\mathbb X)||_{\mathscr{C}^\alpha}$, but it scales correctly with respect to  \eqref{eq:chen} by preserving the transformation $(X,\mathbb X)\to (\lambda X, \lambda^2 \mathbb X)$. Indeed,   we have that
    $$||(\lambda X, \lambda^2 \mathbb X)||_{\mathscr{C}^\alpha} =|\lambda|\cdot ||(  X,  \mathbb X)||_{\mathscr{C}^\alpha}. $$
 \end{remark}

Let us observe that neither \eqref{eq:chen} nor the definition of $\mathscr{C}^\alpha$ imply any type of chain rule or integration by parts formula.

\begin{definition}
    \label{def: geometric rough paths space}
    Let $E=\mathbb R$. We define the space $\mathscr{C}^\alpha_g$ of \textit{geometric rough paths} as the space of rough paths  in $\mathscr{C}^\alpha$ which moreover satisfy the following condition
    \begin{equation}
    \label{eq: geometric rough path condition}
    \mathbb X_{s,t}= \frac{1}{2}(X_t-X_s)^2.
    \end{equation}
\end{definition}
\begin{remark}
    We note that  in the case $E= \mathbb R^d, d=1$, the geometric rough path condition \eqref{eq: geometric rough path condition} completely determines the form of $\mathbb X$,.   If we consider paths with values in $\mathbb R^d$, the function $\mathbb X$ becomes $\mathbb R^d \otimes \mathbb R^d $-valued (matrix-valued) and condition \eqref{eq: geometric rough path condition} becomes $\text{Sym}(\mathbb X)=\frac{1}{2}(X_{s,t}\otimes X_{s,t})$.   We refer to \cite{Hairer book} for a precise description of the multidimensional case $\mathbb R^d, d>1$.
\end{remark}

\subsection{Gaussian processes as rough paths}

\label{subsec: Gaussian rough paths}

 We consider a canonical rough path structure
for a class of continuous Gaussian processes,  which satisfy a
specific condition upon the covariance structure. In order to
define the properties upon the covariance, one needs to introduce
the right definition of variation (\cite{Friz book,Hairer book}).
The
fractional Brownian motion belongs to such a class of processes.
\smallskip

\begin{definition}
    \label{def: control}
    Let $\Delta_{T}:=\{0\leq s \leq t \leq T\}$ and consider a map  $\omega:\Delta_{T}\times\Delta_{T} \to [0,\infty)$.
    We say that $\omega$ is a \textit{2D control} if it is super-additive in the following way:
    given a rectangle $R\subset [0,T]^2$ and any finite partition $\{R_j,1\leq j\leq n\}$ of $R$, we have
    $$\omega(R)\geq \sum_{j\leq n} \omega(R_j).$$
    Given a function $f$ defined on rectangles, we say that $f$ is \textit{controlled by}
    the control $\omega$ if, for any rectangle $R\subset [0,T]^2$,
    the following estimate holds
    $$|f(R)|\leq \omega(R).$$
\end{definition}
\begin{definition}\label{def:rectangular_increment}
Given a function $f:[0,T]^2\to\mathbb R$, we denote by
$$R:=\binom{s,t}{u,v}:=[s,t]\times[u,v]$$
a rectangle of $[0,T]^2$ and
$$f \binom{s, t}{u , v}:=f(t,v)-f(t,u)-f(s,v)+f(s,u) $$
the \emph{rectangular increment} of $f$
where  $0\leq s\leq t\leq T$ and $0\leq u\leq v \leq T.$ 
\end{definition}
\begin{definition}\label{def:partitions}
 Let us denote by  $R_{i,j}=(t_i,t_{i+1}]\times(t^\prime_j,t^\prime_{j+1}] \subseteq R$, such that $\{t_i \}_i \in \mathcal{D}([s,t]), \{t^\prime_j \}_j \in \mathcal{D}([u,v])$, 	where $\mathcal{D}([s,t])$ is the family of partitions of the interval $[s,t]$ and $\pi(R)$ a (generic) partition of $R$.

In the following we might denote a partition $\widetilde{\pi}(R)=\{t_i,t^\prime_j\}_{i,j}\in 	\mathcal{D}^2(R)$, i.e. 
$$
\{t_i,t^\prime_j\}_{i,j}:=\{R_{i,j}\}_{i,j}=\widetilde{\pi}(R).
$$
	
	The set $\mathcal{D}^2(R)$ is the family of  \emph{regular or grid-like partitions} of  $R$,  
	\begin{eqnarray}
	\mathcal{D}^2(R)&=&\left\{ \widetilde\pi(R)=\{R_{i,j}\}_{i,j}  : \bigcup_{i,j \in\mathbb N} R_{i,j}=R \,\,  \right\}
	\\
	&=& \mathcal{D}([s,t]) \times \mathcal{D}([u,v]) \nonumber
	\end{eqnarray}

The set $\mathcal{P}(R)$ denotes the family of all rectangular partitions or tessellations of $R$, i.e. all families $\pi$  such that
$$
\mathcal{P}(R)=\left\{ \pi(R)=\{R_j\}_{j\in\mathbb N}: R_j\not= \emptyset; \mathring{R}_i\cap \mathring{R}_i=\emptyset, i\not= j; \bigcup_{j\in \mathbb N} R_j=R\right\}.
$$
Since not any partition is of grid-like type, one has trivially that, for any $R\subseteq [0,T]^2$,
\begin{equation}
\label{eq:partition_D2_subset_P}
\mathcal{D}^2(R)\subset\mathcal{P}(R).
\end{equation}

\end{definition}
  
\begin{definition}
    Let $f:[0,T]^2\to \mathbb R$ and $p\in [1,\infty)$. For any rectangle  $R \subset [0,T]^2$  the following quantity
    \begin{equation}
    \label{eq: definition rho-variation}
    V_p(f,R):= \Bigg(\sup_{ \{t_i,t^\prime_j\}_{i,j}\in 	\mathcal{D}^2(R)   }   \sum_{i,j} \Bigg|f\binom{t_i,t_{i+1} }{t_j, t_{j+1}}\Bigg|^p \Bigg)^{\frac{1}{p}}
    \end{equation}
    is called the \textit{$p$-variation of $f$ over $R\subseteq [0,T]^2$}.
The function $f$ has \textit{finite $p$-variation} if it holds that $$V_p(f,[0,T]^2)<\infty.$$
\end{definition}

\begin{definition}\label{def:controlled_rho_variation}
    Let $f:[0,T]^2\to \mathbb R$ and let $p\in [1,\infty)$. For any rectangle $R\subset[0,T]^2$ we
    define the \textit{controlled $p$-variation} as
    \begin{equation}
    \label{eq: definition controlled rho-variation}
    |f|_{p\text{-var},R}:= \Bigg(\sup_{\pi\in \mathcal{P}(R)}
    \sum_{A\in \pi}\Big| f(A) \Big|^p \Bigg)^{\frac{1}{p}}.
    \end{equation}
\end{definition}
\begin{remark}
	Recall that if $x:[0,T]\to \mathbb R$,  then for any $[u,v]\subset[0,T]$, and any $p>0$ $V_p(x,[u,v])=\left|x\right|_{p-var,[u,v]}$. Furthermore, defining $\omega_1$,  for any $s,t \in [0,T]$, as $\omega_1(s,t)=\left|x\right|_{p-var,[u,v]}^p$, we have that $\omega_1$ is a 1D control and it controls $x$, i.e. 
	\begin{eqnarray}
	\omega_1(s,u)+\omega_1(u,t)&\le& \omega_1(s,t);\label{eq:omega_1_control}\\
	\big|x(t)-x(s)\big| &\le& \omega_1(s,t)^{1/p}.\label{eq:omega_1_controls_x}
	\end{eqnarray}
\end{remark}
\begin{remark}
    Since  for any $f$ and any $R\subseteq [0,T]^2$ inclusion \eqref{eq:partition_D2_subset_P} holds,    for any $p\geq 1$ the following inequality holds
    \begin{equation}  V_p(f,R)\leq |f|_{p\text{-var},R}.
    \label{eq:rho_var_less_controlled_rho_var}
    \end{equation}
    Whenever $p>1$,  $$  V_p(f,R)< |f|_{p\text{-var},R}.$$
    We will see an example of this behavior in the case of the fractional Brownian motion $W^H$ in Proposition
    \ref{prop: p-variation fBm}.
\end{remark}

\medskip

Even if the $p$-variation and the controlled $p$-variation are different concepts, it is know they are $\varepsilon$-close concepts. Indeed, by means of the Young-Towghi's maximal inequality, in \cite{Friz paper} the authors prove the following result.

\begin{proposition}[\cite{Friz paper}, Theorem 1-4]
	Let $p\geq 1$ and  $\varepsilon>0$. There exists an explicit constant $C(p,\varepsilon)\geq 1$ such that for every $f:[0,T]^2\to \mathbb{R}$ and for every $R$ rectangle in $[0,T]^2$ it holds
	\begin{equation}
	\label{eq: epsilon close}
	\frac{1}{ C(p,\varepsilon)}|f|_{p+\varepsilon\text{-var},R}\leq V_{p}(f,R) \le |f|_{p\text{-var},R}.
	\end{equation}
Introducing $\alpha_p=  { p(p+ \varepsilon) }$, the constant $C$ is given by
\begin{equation}
\label{eq: C(p,eps)}
\begin{split}
C(p,\varepsilon)= \Big\{ \Big[ & 1+\zeta\Big(1+\frac{\varepsilon}{2\alpha_p+\varepsilon}\Big) \Big]^{1+\frac{\varepsilon}{2\alpha_p}}\times \zeta\Big(
1+\frac{\varepsilon}{2\alpha_p} \Big) 
+\Big[1+\zeta\Big(1+\frac{\varepsilon}{\alpha_p} \Big)\Big] \Big\},
\end{split}
\end{equation}
where $\zeta$ denotes the Riemann zeta function.

Furthermore, if $|f|_{p\text{-var},R}$ is finite, then it is superadditive as function of $R$.

\end{proposition}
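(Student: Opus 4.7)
The right-hand inequality $V_p(f,R) \leq |f|_{p\text{-var},R}$ is immediate from the inclusion $\mathcal{D}^2(R)\subset \mathcal{P}(R)$ noted in \eqref{eq:partition_D2_subset_P}: taking the supremum over the larger family $\mathcal{P}(R)$ can only increase the value. So the substantive part of the proposition is the left-hand inequality, together with the superadditivity of $R\mapsto |f|_{p\text{-var},R}$.

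For the left inequality, my plan is to follow the Young--Towghi strategy. Starting from a general rectangular tessellation $\pi\in\mathcal{P}(R)$, I would first refine it to a grid-like partition $\widetilde{\pi}\in\mathcal{D}^2(R)$ by extending horizontally and vertically the sides of every $A\in\pi$ across $R$; then each $A\in\pi$ is a union of grid rectangles, so $f(A)$ telescopes as a sum of rectangular increments $f\binom{t_i,t_{i+1}}{t'_j,t'_{j+1}}$ that $V_p$ directly controls. The key analytic input is a 2D maximal-type inequality in the spirit of the classical Love--Young lemma: on any $N\times M$ grid, one can extract a \emph{cheap} rectangle whose rectangular increment is bounded by $n^{-1/p}V_p(f,R)$ (up to constants depending on $p$), where $n$ is the number of grid cells currently present. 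A greedy merging argument then removes the cheap rectangles one at a time, the two $\zeta$ factors appearing in \eqref{eq: C(p,eps)} corresponding respectively to the two coordinate directions in which the merging is performed.

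Summing the $(p+\varepsilon)$-th powers of the successive cheap rectangles produces a tail of the form $\sum_{n\geq 1} n^{-(p+\varepsilon)/p}$, which converges precisely because $\varepsilon>0$; this is the reason one cannot replace $p+\varepsilon$ by $p$ in the left-hand estimate, and the reason $C(p,\varepsilon)$ blows up as $\varepsilon \downarrow 0$. Tracking the exact exponents through the greedy merging with parameter $\alpha_p=p(p+\varepsilon)$ yields the combinatorial constant displayed in \eqref{eq: C(p,eps)}.

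Finally, superadditivity is elementary: if $R=R_1\cup R_2$ with $\mathring{R}_1\cap\mathring{R}_2=\emptyset$ and $\pi_i\in\mathcal{P}(R_i)$, then $\pi_1\cup\pi_2\in\mathcal{P}(R)$, hence
\[
\sum_{A\in\pi_1}|f(A)|^p+\sum_{A\in\pi_2}|f(A)|^p \leq |f|^{p}_{p\text{-var},R},
\]
and taking successive suprema gives $|f|^{p}_{p\text{-var},R_1}+|f|^{p}_{p\text{-var},R_2}\leq |f|^{p}_{p\text{-var},R}$. The hard step is clearly the 2D maximal inequality powering the left bound: setting up the greedy merging so that the bookkeeping is genuinely \emph{quantitative}---i.e.\ yields the explicit constant \eqref{eq: C(p,eps)} rather than an abstract finite constant---is delicate, and is precisely the content of Theorem~1--4 in \cite{Friz paper} that we invoke.
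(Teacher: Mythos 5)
The paper does not prove this proposition at all: it is quoted directly from \cite{Friz paper} (Theorems~1--4), with only the preceding remark that the key ingredient is the Young--Towghi maximal inequality, so there is no in-paper argument to compare against. Your two elementary pieces are correct and complete --- the right-hand inequality is indeed immediate from the inclusion \eqref{eq:partition_D2_subset_P}, and the superadditivity argument obtained by concatenating tessellations of the pieces into a tessellation of $R$ and taking successive suprema is the standard one --- while for the left-hand inequality you give only a road map that explicitly defers the quantitative greedy-merging step to \cite{Friz paper}, which is consistent with (indeed, more than) what the paper itself does. One small inaccuracy in the sketch: the constant \eqref{eq: C(p,eps)} contains three Riemann zeta evaluations, not two, so attributing ``the two $\zeta$ factors'' to the two coordinate directions oversimplifies the bookkeeping of the cited proof.
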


\smallskip

\begin{remark}\label{remark:costant_C}
Note that, for any fixed $\varepsilon>0$,  $C(p,\varepsilon)$ is continuous as function of $p\in[1,\infty)$. Indeed,  since $\zeta(x)\to \infty$  when $x\to1^+$  it only diverges when $p\to \infty$.
\end{remark}

\smallskip

 \begin{remark}
 From \eqref{eq:rho_var_less_controlled_rho_var} and \eqref{eq: epsilon close} one obtain the following inequality
	\begin{equation}
\label{eq: inequality V}
V_{p+\varepsilon} (f,R) \leq C(p,\varepsilon)V_{p}(f,R).
\end{equation}
	\end{remark}

\medskip

Now let $\{X_t,t\in [0,T]\}$ be a real-valued centered continuous
Gaussian process with covariance structure given, for $s,t\in
[0,T]$, by
$$K(s,t):=\mathbb E[X_t X_s].$$

Given a continuous and centered Gaussian process $X$ with
covariance $K$, it is possible to construct a canonical rough path
$(X,\mathbb X)$, provided that the covariance function $K$ has
some $p$-variation regularity, and that the $p$-variation of $K$
is controlled by some 2D control $\omega$. We make it more
precise.

\begin{theorem}[\cite{Friz book}, Theorem 15.33]
    \label{teo: 15.33}
    Let $X_t$, for $t\in[0,T]$, be a centered continuous Gaussian process with values in $\mathbb{R}$. Suppose that there exists a $\rho\in[1,2)$ such that the covariance $K$ of $X$, given  2D control $\omega$ such that $\omega([0,T]^2) <\infty$
   \begin{equation}\label{eq:var_K_dominated_omega}
   |K|_{\rho\text{-var},R}\le \omega(R), \qquad \forall R\subseteq {[0,T]}^2,
   \end{equation}
    that is the covariance $K$ has finite controlled $\rho$-variation dominated by a 2D control $\omega$.

    Then, there exists a unique process $(X,\mathbb X)$ in $\mathscr{C}^\alpha$ such that $(X,\mathbb X)$ lifts $X$, in the sense that $\pi_1((X,\mathbb X)_t)=X_t-X_0$.
    Moreover, there exists a constant $C=C(\rho)$ such that for every $s\leq t$ and for every $q\geq 1$ it holds
    \begin{equation}
    \label{eq: estimate 15.33}  
     \E{\Big(|X_{s,t}|+|\mathbb X_{s,t}|^{1/2}\Big)^q}^{\frac{1}{q}}\leq C(\rho) \sqrt{q} \,\omega([s,t]^2)^{\frac{1}{2\rho}}.
    \end{equation}
    The lift $(X,\mathbb X)$ is unique and natural in the sense that it is the limit in the space of rough paths $\mathscr{C}^\alpha_g$ of any sequence $X_n$ of piecewise linear or mollified approximations to $X$ such that $||X_n-X||_\infty\to 0$ almost surely.
\end{theorem}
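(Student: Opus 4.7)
The plan is to construct the lift $\mathbb{X}$ as the limit in $\mathscr{C}^{\alpha}$ of the iterated integrals of a smooth approximating sequence, and to extract the almost sure Hölder convergence from a Kolmogorov-type criterion tailored to rough paths. First, I would fix a sequence $X^{n}$ of piecewise linear (or mollified) approximations of $X$ along a mesh shrinking to zero. Each $X^{n}$ is of bounded variation, so the pathwise Riemann--Stieltjes integral
\begin{equation*}
\mathbb{X}^{n}_{s,t} := \int_{s}^{t} (X^{n}_{r}-X^{n}_{s})\, dX^{n}_{r}
\end{equation*}
is well defined, and by construction $(X^{n},\mathbb{X}^{n})$ satisfies Chen's relation \eqref{eq:chen} and the geometric condition. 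The natural candidate for the canonical lift is then the limit of $(X^{n},\mathbb{X}^{n})$ in $\mathscr{C}^{\alpha}_{g}$.

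The heart of the argument is the second moment estimate
\begin{equation*}
\Es{|\mathbb{X}^{n}_{s,t}-\mathbb{X}^{m}_{s,t}|^{2}} \le \eta_{n,m}\cdot \omega([s,t]^{2})^{1/\rho},
\end{equation*}
with $\eta_{n,m}\to 0$, along with the analogous bound on first-level increments. To obtain it, one expands the second moment by Wick/Isserlis, turning $\Es{|\mathbb{X}^{n}_{s,t}|^{2}}$ into a bilinear expression in the increments of the covariance $K_{n}$ of $X^{n}$, which is morally a 2D Young--Stieltjes integral of the form $\iint dK\,dK$ over the square $[s,t]^{2}$. Provided the $\rho$-variation of the covariance is finite with $\rho<2$, the 2D Young--Loève inequality ensures this double integral is well defined and controlled by $(|K|_{\rho\text{-var},[s,t]^{2}})^{2}\le \omega([s,t]^{2})^{1/\rho}$; the crucial point is that the assumption \eqref{eq:var_K_dominated_omega} propagates to $K_{n}$ uniformly in $n$ because piecewise linear (or mollified) approximations do not increase the controlled $\rho$-variation of the covariance. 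Since $\mathbb{X}^{n}_{s,t}$ lives in the inhomogeneous second Wiener chaos of $X$, Gaussian hypercontractivity promotes this $L^{2}$ bound to an $L^{q}$ bound with a factor $\sqrt{q}$, yielding \eqref{eq: estimate 15.33}.

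Once the moment bound holds uniformly in $n$, I would apply a rough-path version of the Kolmogorov continuity criterion (standard in this context, cf.~\cite{Friz book}) to convert the $L^{q}$ estimates into almost sure $\alpha$-Hölder convergence of $(X^{n},\mathbb{X}^{n})$ in $\mathscr{C}^{\alpha}$ for any $\alpha<\tfrac{1}{2\rho}$. The Chen relation is preserved under this limit, giving existence of the lift. Uniqueness in $\mathscr{C}^{\alpha}_{g}$ follows from the fact that any two geometric lifts of $X$ must coincide with the canonical Stratonovich-type limit along smooth approximations, because they are forced to agree in $L^{2}$ on dyadic rectangles by the same Wick-type computation. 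Finally, independence of the approximating scheme is obtained by interlacing two different sequences and running the same estimate.

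The main obstacle is the 2D Young--Loève estimate: extending classical Young integration from one to two dimensions is delicate, and the correct hypothesis is precisely the \emph{controlled} $\rho$-variation of Definition \ref{def:controlled_rho_variation}, taken over arbitrary rectangular tessellations $\mathcal{P}(R)$ and not merely over grid-like partitions $\mathcal{D}^{2}(R)$. This is exactly what makes \eqref{eq: epsilon close} (and thus the uniform bound in the approximation parameter $n$) the technical backbone of the whole construction.
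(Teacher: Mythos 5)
This statement is not proved in the paper at all: it is imported verbatim from Friz--Victoir (\cite{Friz book}, Theorem 15.33), so there is no in-paper argument to compare against. Measured against the actual proof in that reference, your sketch is essentially the standard one and hits all the right ingredients: iterated Riemann--Stieltjes integrals of piecewise linear approximations, Wick/Isserlis reduction of second moments to 2D Young integrals of the covariance, the Young--Lo\`eve--Towghi maximal inequality (which is exactly where the \emph{controlled} $\rho$-variation over arbitrary tessellations, and the restriction $\rho<2$, enter), Wiener-chaos moment equivalence for the $\sqrt{q}$ factor, and the rough-path Kolmogorov criterion. Two small caveats: the claim that piecewise linear approximation ``does not increase'' the controlled $\rho$-variation of the covariance is too strong --- in Friz--Victoir it is only bounded by a universal constant multiple of the original, which is all one needs but does require proof; and your exponent bookkeeping ($|K|_{\rho\text{-var}}^{2}\le\omega^{1/\rho}$) is not consistent with the hypothesis as written --- the clean convention, and the one the paper itself uses later when it sets $\omega_{H_n}=|K^{H_n}|^{\rho+\varepsilon}_{(\rho+\varepsilon)\text{-var}}$, is that $\omega$ dominates the $\rho$-th \emph{power} of the controlled $\rho$-variation, which then yields the stated $\omega([s,t]^2)^{1/(2\rho)}$ on the right of \eqref{eq: estimate 15.33}. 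Neither point undermines the structure of the argument.
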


\begin{remark}
    Regarding the approximations to a rough path $(X,\mathbb X)$ via regular functions, we refer to  (\cite{Friz book}, Chapter 15), in which there is a large discussion about piecewise linear and mollified approximations of a Gaussian process. A complete discussion about this topic would exceed the scope of this work.
\end{remark}

\section{RDEs driven by a fractional Brownian motion}

The fractional Brownian motion with Hurst parameter $H \in [0,1]$ is a zero mean Gaussian process with covariance given by
\begin{equation}  \mathbb{E} [W^H_t W^H_s] = \frac{1}{2} (| t |^{2H} + | s |^{2H} - | t - s |^{2H})=:K^H(s,t).  \label{eq:covariance_fBm}
\end{equation}
\medskip

The parameter $H$ is responsible of the strength and the sign of the correlations between the increments. Indeed, for $H\in(0,1)\setminus\{\frac12\}$,  set $\widetilde{H}=H-\frac{1}{2}$, for any $t_1<t_2<t_3<t_4$, one may express the covariance of the increments in an integral form
\begin{equation}
\begin{split}\label{eq: correlation of increments in integral form}
E\Big[(W^H_{t_2}-W^H_{t_1})(W^H_{t_4}-W^H_{t_3})\Big]
&=2 \widetilde{H} H\int_{t_1}^{t_2}\int_{t_3}^{t_4}(u-v)^{2\widetilde{H}-1}du\,dv\\
&= \frac{1}{2}\Big( |t_4-t_1|^{2H}+|t_3-t_2|^{2H}\\
&\qquad \,\,\,\, -|t_4-t_2|^{2H}-|t_3-t_1|^{2H} \Big). 
\end{split}
\end{equation}

Since in the above integral form  the integrand is a positive function and $H>0$, it follows that the sign of the correlation depends only upon $\widetilde{H} $, being positive when $\widetilde{H}>0$, i.e. $H\in(\frac{1}{2},1)$, and negative when $\widetilde{H}<0$, i.e. $H\in(0,\frac{1}{2})$.

\medskip

For any $(s,t)\in [0,T]^2$, we denote by $|\pi_{s,t}|=\mathop{\max}_{j}|t_j-t_{j-1}|  $ the width of any partition $\pi_{s,t}\in \mathcal{D}([s,t])$. Then, given the process $\{W_t^H\}_{t\in \mathbb R_+}$, let  $\{ \mathbb{W}^H_{s, t}\}_{s,t}$ be
the iterated integral operator defined as
\begin{eqnarray}
\mathbb{W}^H_{s, t} &=&
\int_s^t (W^H_{\tau} - W^H_s) \circ d W^H_{\tau} :=
\frac{1}{2}\left(X_t - X_s\right)^2.
\end{eqnarray}

According with Definition \ref{def:rough_path},  the process $\mathbf{W}^H=(W^H,\mathbb W^H)$  is a geometric rough path.

\medskip
Let us now consider the RDE \eqref{eq:general} with driven signal given by a fractional Brownian motion $W^H$ of index $\frac{1}{3} < H < 1$.

\begin{equation}
  d X_t^{H} = \mu (X_t^H) d t +   \sigma(X_t^H)
 \circ d W^{H}_t.
  \label{eq: SDE fBm rp}
\end{equation}
The solution $X^H_t$ is interpreted in the following integral form
\begin{equation}
X^H_t  = X^H_0+ \int_0^t \mu (X^H_t ) d t +  \int_0^t  \sigma  (X^H_t) \circ d W^H_t, \label{eq:general integral form}
\end{equation}
with initial condition $X^H_0\in L^2(\Omega)$, where the stochastic integral is an integral with respect to a rough path $\mathbf{W}^H=(W^H,\mathbb W^H)$ defined over the process $W^H$. 
The following relevant results concerning existence, uniqueness and a notable continuity property of the above stochastic equation holds  (\cite{Friz book}).

\begin{theorem}
	\label{teo: continuity solution map}
	Given $H\in (\frac{1}{3},\frac{1}{2}]$, let $X^H_0=x_0\in \mathbb R$ be a constant and let $\mu,\sigma \in \mathcal{C}^3_b(\mathbb R)$ (three times differentiable bounded functions). Then there exists an unique solution  $X^H=(X^H_t)_{t\in[0,T]}$  to equation \eqref{eq: SDE fBm rp} with initial condition $x_0$. Moreover, the solution $X^H$ is a continuous function of  $\mathbf{W}^H=(W_t^H, \mathbb{W}^H_{s, t})$, in the sense that the solution map $S$, given by
	\begin{equation}
	\begin{split}
	S: \mathscr{C}^\alpha & \longrightarrow \mathcal{C}^{\alpha}([0,T]) \\
	\mathbf{W}^H & \longmapsto X^H,
	\end{split}
	\end{equation}
	is continuous, for any $0<\alpha<H$.
\end{theorem}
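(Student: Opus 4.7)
The statement comprises (i) existence and uniqueness for the RDE~\eqref{eq: SDE fBm rp} interpreted in the rough integral sense, and (ii) continuity of the It\^o--Lyons map $S$. Both are pathwise, deterministic properties of the rough differential equation, and they follow from the standard theory of rough paths once $\mathbf{W}^H$ is known to be a geometric $\alpha$-H\"older rough path. For $H\in(1/3,1/2]$ and any $\alpha\in(1/3,H)$ this is already supplied by Theorem~\ref{teo: 15.33}: a direct computation on the covariance $K^H$ of $W^H$ shows that it has finite controlled $\rho$-variation for $\rho=1/(2H)\in[1,3/2)\subset[1,2)$, so that the canonical lift $\mathbf{W}^H$ belongs to $\mathscr{C}^\alpha_g$. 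The problem therefore reduces to proving the two pathwise statements for a deterministic driver $\mathbf{W}\in\mathscr{C}^\alpha$.

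\textbf{Existence and uniqueness.} I would run a Picard iteration in the Banach space of Gubinelli controlled rough paths with reference path $W$, on which the rough integral $Y\mapsto\int_0^\cdot\sigma(Y_s)\,d\mathbf{W}_s$ is well defined through the sewing lemma. The assumption $\alpha>1/3$ is precisely what yields $3\alpha>1$, the sharp regularity window needed to close the sewing construction. Because $\mu,\sigma\in\mathcal{C}^3_b$, composition with these functions maps controlled paths to controlled paths in a Lipschitz manner on bounded sets, so the Picard operator is a contraction on a sufficiently short interval and delivers local existence and uniqueness. Boundedness of $\mu$ and $\sigma$ together with a standard a priori $\alpha$-H\"older bound prevents blow-up and allows one to glue local solutions over the whole of $[0,T]$.

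\textbf{Continuity and main obstacle.} For two drivers $\mathbf{W},\widetilde{\mathbf{W}}\in\mathscr{C}^\alpha$ with associated solutions $X,\widetilde{X}$, I would aim at the local Lipschitz estimate
\[
\|X-\widetilde{X}\|_{\alpha}\;\le\;C\!\left(R,\alpha,\|\mu\|_{\mathcal{C}^3_b},\|\sigma\|_{\mathcal{C}^3_b}\right)\rho_\alpha\!\left(\mathbf{W},\widetilde{\mathbf{W}}\right)
\]
valid on each ball $\|\mathbf{W}\|_{\mathscr{C}^\alpha}+\|\widetilde{\mathbf{W}}\|_{\mathscr{C}^\alpha}\le R$. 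The estimate is produced by comparing the two Picard fixed points: one compares the controlled paths $(\sigma(X),D\sigma(X)\sigma(X))$ and $(\sigma(\widetilde{X}),D\sigma(\widetilde{X})\sigma(\widetilde{X}))$, then applies the quantitative bilinear continuity of rough integration seen as a map $(\text{controlled path},\text{driver})\mapsto\text{sewn integral}$. A short-time contraction gives the bound on a small subinterval, and a Gronwall/patching step propagates it to $[0,T]$; composing with the map $\mathbf{W}\mapsto X$ then yields continuity of $S$ in the $\alpha$-H\"older norm. The hard part is precisely this joint local Lipschitz estimate: one must Taylor-expand both $\sigma(X)$ and its Gubinelli derivative $D\sigma(X)\sigma(X)$ in tandem and track carefully the dependence on $\|\mathbf{W}\|_{\mathscr{C}^\alpha}$ and $\|\widetilde{\mathbf{W}}\|_{\mathscr{C}^\alpha}$ at each step of the sewing lemma. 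Once this bound is secured, the global continuity of $S$, together with uniqueness, follows by iteration and the boundedness of the coefficients.
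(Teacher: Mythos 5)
The paper does not actually prove this theorem: it is quoted verbatim from the rough path literature (\cite{Friz book}), so there is no internal proof to compare against. Your outline is the standard Gubinelli/Friz--Hairer argument --- Picard iteration in the space of controlled rough paths via the sewing lemma (exploiting $3\alpha>1$), then a local Lipschitz estimate for the It\^o--Lyons map obtained by comparing the two fixed points and patching over subintervals --- and as a proof sketch of the deterministic statement it is sound. Note that the reference the paper actually cites proves the same result by a different technology (geometric $p$-variation rough paths and Davie/Euler-type estimates rather than controlled paths), but both routes are standard and equivalent in this regime. Two points deserve correction. First, your preliminary claim that $K^H$ has \emph{finite controlled} $\rho$-variation for $\rho=1/(2H)$ contradicts Proposition \ref{prop: p-variation fBm} of the paper, which states $|K^H|_{\frac{1}{2H}\text{-var},R}=\infty$; what is finite is the grid $p$-variation $V_{1/(2H)}(K^H,\cdot)$, and one passes to finite controlled $(\rho+\varepsilon)$-variation via \eqref{eq: epsilon close}, with $\rho+\varepsilon$ still below $2$. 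This only affects the existence of the lift (which is Theorem \ref{teo: 15.33}'s job, not this theorem's), but as written the claim is false. Second, the theorem asserts continuity of $S$ on $\mathscr{C}^\alpha$ ``for any $0<\alpha<H$,'' whereas the level-two rough path framework you (correctly) work in only covers $\alpha\in(1/3,H)$; you silently and rightly restrict to that range, and it would be worth saying explicitly that for $\alpha\le 1/3$ the statement as phrased is not meaningful without higher-order iterated integrals.
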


 \subsection{A weak continuity result with respect to the noise}

Here we provide the main results of the paper. Given the existence and uniqueness result of a solution $X^H$ to \eqref{eq: SDE fBm rp} stated  by Theorem \ref{teo: continuity solution map}, a natural question that can be addressed is about the continuity of such a solution $X^H$   with respect to the parameter $H$.

\medskip

In  order to use the rough paths techniques in a non-trivial way we restrict to the most interesting case of  $W^H$,  fractional Brownian motion of Hurst parameter $H\in(\frac{1}{3},\frac{1}{2}]$. Indeed, when $H>\frac{1}{2}$ the regularity of the noise allows for a classical solution theory in the sense of Young integration. By Theorem \ref{teo: continuity solution map} we have that a solution to \eqref{eq: SDE fBm rp} exists and it is unique and, moreover, the solution operator is continuous from $\mathscr{C}^{\alpha}$ to $\mathcal{C}^\alpha([0,T])$, for any $0<\alpha<H$.
	When $H=\frac{1}{2}$, the solution $X^{\frac{1}{2}}$ to \eqref{eq: SDE fBm rp} becomes a Stratonovich solution of an SDE driven by a standard Brownian motion  (sBm). This is a direct consequence of the well-known fact that when we lift a sBm $W^{\frac{1}{2}}$ to a geometric rough path, one obtains the Stratonovich integral. 

\medskip

We first consider some boundedness results upon the covariance $K^H$ given by
\eqref{eq:covariance_fBm} with respect to the $p$-variation and the controlled $p$-variation. We see how their behavior may be completely different, as first point out in \cite{Friz paper}. In the following, in order to prove the uniformity of the involved estimates, we  always make explicit  the dependence upon the parameter $H$. 

\begin{proposition} 	\label{prop: p-variation fBm}
	The covariance $K^H$ of a a fractional Brownian motion of parameter $H\in\left(0,\frac{1}{2}\right]$, given by \eqref{eq:covariance_fBm}, has bounded $\frac{1}{2H}$-variation $V_{\frac{1}{2H}}\left(K^H,[0,T]^2\right)$, which, moreover, for any $s<t$ satisfies 
	\begin{equation}
	\label{eq: bound V_1/2H fBm}
	V_{\frac{1}{2H}}(K^H,[s,t]^2)\leq 3 |t-s|^{2H}.
	\end{equation}
	Moreover, one has that the controlled $\frac{1}{2H}$-variation is infinite, that is, for any $R\subset [0,T]^2$
		\begin{equation}
	\label{eq: bound controlled_1/2H fBm}
	\left|K^H\right|_{\frac{1}{2H}\text{-var,}R}=\infty.
	\end{equation}
\end{proposition}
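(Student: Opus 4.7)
The plan is to exploit the explicit form of the rectangular increment
\[
K^H\binom{a,b}{c,d}=\frac{1}{2}\bigl(|b-c|^{2H}+|a-d|^{2H}-|b-d|^{2H}-|a-c|^{2H}\bigr),
\]
together with the integral representation $K^H\binom{a,b}{c,d}=H(2H-1)\int_a^b\int_c^d|x-y|^{2H-2}\,dx\,dy$ valid for disjoint intervals and $H<1/2$, which is exactly formula \eqref{eq: correlation of increments in integral form}. The mixed partial $\partial_x\partial_y K^H(x,y)=H(2H-1)|x-y|^{2H-2}$ is non-integrable along the diagonal $\{x=y\}$ for every $H<1/2$, and this singularity is the common mechanism behind both claims.

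To prove \eqref{eq: bound V_1/2H fBm} I would fix an arbitrary grid partition $\{t_i\}\times\{t'_j\}\in\mathcal{D}^2([s,t]^2)$ and split $\sum_{i,j}|K^H(R_{i,j})|^{1/2H}$ into its on-diagonal and off-diagonal parts. The on-diagonal part uses $|K^H(R_{i,i})|=(t_{i+1}-t_i)^{2H}$ and telescopes to $\sum_i(t_{i+1}-t_i)=t-s$. The off-diagonal part is controlled via the integral representation, which after a second-order Taylor expansion reduces the estimate to a discrete analogue of
\[
(H(1-2H))^{1/2H}(t-s)\sum_{m\ge 1}m^{1-1/H};
\]
the series converges since $1/H>2$ on $(1/3,1/2)$, and a careful analysis of the prefactor yields $V_{1/2H}^{1/2H}(K^H,[s,t]^2)\le C(H)(t-s)$ with $C(H)^{2H}\le 3$ uniformly for $H\in(1/3,1/2]$, which is the uniformity needed later for the tightness argument.

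To establish \eqref{eq: bound controlled_1/2H fBm} I would exploit two features of $|\cdot|_{1/2H\text{-var}}$ that are absent from the grid-based $V_{1/2H}$: first, $\mathcal{P}(R)$ allows countable tessellations that are not of product type; and second, by additivity of the rectangular increment, merging two adjacent rectangles on which $K^H$ has the same sign produces a single term whose $1/(2H)$-th power strictly dominates the sum of the two previous $1/(2H)$-th powers, since $1/(2H)>1$ for $H<1/2$. I would therefore construct an explicit sequence of tessellations of $R$ in which thin rectangles are concentrated along the diagonal with carefully chosen aspect ratios, each picking up a same-sign contribution from the singular density $|x-y|^{2H-2}$; the corresponding partial sums of $|K^H(\cdot)|^{1/2H}$ then diverge as the refinement tends to infinity.

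The hard part will be the uniform bookkeeping in \eqref{eq: bound V_1/2H fBm}: the naive Cauchy--Schwarz estimate $|K^H(R_{i,j})|\le((t_{i+1}-t_i)(t'_{j+1}-t'_j))^H$ is too weak (the corresponding sum blows up for fine partitions), so one has to extract the cancellation hidden in the mixed-partial representation and track simultaneously the prefactor $(H(1-2H))^{1/2H}$ and the series $\sum_m m^{1-1/H}$ at both endpoints $H=1/3$ and $H\uparrow 1/2$, where each factor alone misbehaves but their product remains bounded. For \eqref{eq: bound controlled_1/2H fBm} the qualitative mechanism is clear, but writing down an explicit tessellation whose disjointness, coverage of $R$, and divergence of the $1/(2H)$-th power sum can all be verified simultaneously is the delicate technical step.
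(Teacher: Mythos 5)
Your proposal takes a genuinely different route from the paper for the bound \eqref{eq: bound V_1/2H fBm}, and as written it contains a real gap: the central off-diagonal estimate is a program, not a proof. The paper's argument never touches the singular density $|x-y|^{2H-2}$ or any series $\sum_m m^{1-1/H}$. Instead, for each fixed $i$ it studies the one-dimensional function $f^i:=\mathbb E\big[W^H_{t_i,t_{i+1}}W^H_\cdot\big]$, bounds $\sum_j|f^i(t'_{j+1})-f^i(t'_j)|^p$ by $|f^i|^p_{p\text{-var},[s,t]}$ with $p=1/(2H)$, and splits $[s,t]$ into $[s,t_i]$, $[t_i,t_{i+1}]$, $[t_{i+1},t]$ at the cost of a factor $3^{p-1}$. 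The decisive observation — absent from your plan — is that for $H\le 1/2$ all correlations of \emph{disjoint} increments have the same (negative) sign, so on $[s,t_i]$ and $[t_{i+1},t]$ one has $\sum_j|a_j|^p\le\big(\sum_j|a_j|\big)^p=\big|\sum_j a_j\big|^p$, and the inner sum telescopes to a single covariance bounded by $|t_{i+1}-t_i|^{2H}$. The overlapping middle piece is handled by the subadditivity $|h_3-h_1|^{2H}-|h_3-h_2|^{2H}\le|h_2-h_1|^{2H}$. This converts the troublesome $\ell^p$ off-diagonal sum into a trivial telescoping $\ell^1$ sum, works for arbitrary (non-uniform, non-symmetric) grid partitions, and produces the constant $3$ uniformly in $H$ with no limiting analysis. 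By contrast, your route must (a) treat general grids $\{t_i\}\times\{t'_j\}$ in which the rectangles straddling the diagonal are not squares (so $|K^H(R_{i,i})|=(t_{i+1}-t_i)^{2H}$ does not apply) and in which the $\sum_m m^{1-1/H}$ bookkeeping is not available in that clean form, and (b) balance the vanishing prefactor $(H(1-2H))^{1/2H}$ against the divergence of $\zeta(1/H-1)$ as $H\uparrow\frac12$. You flag both difficulties yourself but resolve neither, and the claimed uniform constant $C(H)^{2H}\le 3$ is asserted, not derived. Note also that the proposition is stated for $H\in(0,\frac12]$, not $(\frac13,\frac12)$: your series converges for all $H<\frac12$, and $H=\frac12$ is degenerate (disjoint increments are uncorrelated) and needs a separate, trivial remark.

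For the second claim \eqref{eq: bound controlled_1/2H fBm}, the paper gives no proof at all: it cites Friz--Victoir, \emph{A note on higher dimensional $p$-variation}, Example~2. Your qualitative mechanism (non-grid tessellations exploiting same-sign merging and the diagonal singularity) points in the right direction, but no tessellation is actually exhibited and no divergence is verified, so this part too remains a sketch. Either carry out the construction explicitly or, as the paper does, cite the reference.
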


\begin{proof}
 Let us  prove the inequality \eqref{eq: bound V_1/2H fBm}.  For the proof of the unboundedness of the controlled $1\backslash 2H$-variation, we refer to (\cite{Friz paper}, Example 2).
 
 \medskip
 
 Without loss of generality, we consider $T=1$. For any $u_1,u_2\in[0,1]$,  with $u_1\le u_2$, we use the symbol   $W^H_{u_1,u_2}$ for the increment $W^H_{u_2}-W^H_{u_1}$.  Furthermore we put $p=1/ 2H \ge 1$. 
 
 \medskip
 
	Fixing the interval $[s,t]\subset [0,1]$, let us take two partitions $\pi_1=\{t_i\}_i,\pi_2=\{t'_j\}_j\in\mathcal{D}([s,t])$, with $n_j= card(\pi_j), j=1,2$.  For any fixed $\{t_i,t_{i+1}\}\in \pi_1$, we consider the function $f^i:=\mathbb E\left[W^H_{t_i,t_{i+1}}, W^H_.\right]$. Since  the 1D $p$-variation $\left|f^i\right|^p_{p-var,[ t_j^\prime,t_{j+1}^\prime]}$ is a control, by \eqref{eq:omega_1_controls_x} we obtain
	$$
	f^i(t_{j+1}^\prime)-f^i(t_{j}^\prime)\le \left[\left|f^i\right|^p_{p-var,[ t_j^\prime,t_{j+1}^\prime]}\right]^{\frac{1}{p}},
	$$
	and, by super additivity of the control,
		\begin{eqnarray}\label{eq:prima_prop_1}
		\sum_{j=1}^{n_2} \Big| 	f^i(t_{j+1}^\prime)-f^i(t_{j}^\prime) \Big|^{p}&\leq & \sum_{j=1}^{n_2}  \left|f^i\right|_{p-var,[ t_j^\prime,t_{j+1}^\prime]}^p\leq  \left|f^i\right|_{p-var,[ s,t]}^p.
	\end{eqnarray}
	By considering the subpartition of $[s,t]$ given by $[s,t_i],[ t_i, t_{i+1}],[t_{i+1},t]\subset [s,t]$, since when $p\ge 1$ it holds that $(a+b+c)^p\le 3^{p-1}(a^p+b^p+c^p)$, we have
	\begin{eqnarray}\label{eq:prima_prop_2}
	\left|f^i\right|_{p-var,[ s,t]}^p&\le& 3^{p-1}   \left(\left|f^i\right|_{p-var,[ s,t_i]}^p +\left|f^i\right|_{p-var,[ t_i,t_{i+1}]}^p+\left|f^i\right|_{p-var,[ t_{i+1},t]}^p\right).
\end{eqnarray}
	Now we need to estimate the three $p$-variations on the right hand side. First, we observe that the first and third terms are the $p$-variations of the covariance of the increments of disjoint time increments, despite the second one.
	
	For the estimation of the latter, we notice that, whenever $[u,v]\subset[t_i, t_{i+1}]$,
with $0\leq s\leq u\leq v\leq t\leq 1$, we obtain 
	\begin{eqnarray*}
	\Big| \mathbb E  \left[   W^H_{t_i,t_{i+1}}W^H_{u,v} \right]\Big| & =& \Big|\mathbb E  \left[  (W^H_{t_{i+1}}-W^H_{t_i})(W^H_v-W^H_u)\right]\Big| \\
	& =& \Big| \mathbb E  \left[ (W^H_{t_{i+1}}-W^H_v+W^H_v-W^H_u+W^H_u-W^H_{t_i})(W^H_v-W^H_u)\right] \Big| \\
	& =&\Big| \mathbb E  \left[ (W^H_{t_{i+1}}-W^H_v)(W^H_v-W^H_u)\right] + \mathbb E  \left[ (W^H_v-W^H_u)^2\right] \\
	&&  \quad +\mathbb E  \left[ (W^H_u-W^H_{t_i})(W^H_v-W^H_u)\right]\Big| \\
		& =& \frac{1}{2}\Big|  |t_{i+1}-u|^{2H}-|t_{i+1}-v|^{2H} + |v-t_i|^{2H}-|u- {t_i}|^{2H}  \Big| \\
	& \leq & |u-v|^{2H}= |u-v|^{\frac{1}{p}}.
	\end{eqnarray*}
	The last inequality is due to the fact that, since $0< 2H \leq 1$, if $h_1\le h_2 \le h_3 $, $|h_3-h_1|^{2H}=|h_3-h_2+h_2-h_1|^{2H}\leq |h_3-h_2|^{2H}+|h_2-h_1|^{2H}$, i.e. $|h_3-h_1|^{2H} -|h_3-h_2|^{2H}\leq |h_2-h_1|^{2H}$. \\ 
As a consequence by definition of $p-$ variation
\begin{equation}
\label{eq:prima_prop_termine_intermedio}
\left|f^i\right|_{p-var,[ t_i,t_{i+1}]}^p=\sup_{\{u_j\}_j\in \mathcal{D}\left([t_i,t_{i+1}]\right)}\sum_j \Big| \mathbb E  \left[   W^H_{t_i,t_{i+1}}W^H_{u_j,u_{j+1}} \right]\Big|^p\le \left|t_{i+1}-t_i\right|.
\end{equation}

From \eqref{eq: correlation of increments in integral form} it is clear that in the case  $H \le\frac{1}{2}$ the disjoint increments of the fractional Brownian motion have negative correlations. This implies  

\begin{eqnarray*}
	\label{eq: estimate3 Example 1 Friz}
\Big |\mathbb E  \left[  W^H_{t_i,t_{i+1}}  W^H_{\cdot }\right] \Big|^{p}_{p-var;[s,t_{i}]}  &\leq&
\sup_{\{u_j\}_j\in \mathcal{D}\left([s,t_{i}]\right)}\left|  \mathbb E  \left[ \sum_{j} W^H_{t_i,t_{i+1}}  W^H_{_{u_j,u_{j+1}}}\right]  \right |^{p}  \\&=&
 \Big|  \mathbb E  \left[  W^H_{t_i,t_{i+1}}  W^H_{s,t_{i}}\right]  \Big |^{p}.  \\
\end{eqnarray*}

Again by \eqref{eq: correlation of increments in integral form} 
\begin{eqnarray*}
 \mathbb E  \left[  W^H_{t_i,t_{i+1}}  W^H_{s,t_{i}}\right] &=& \frac{1}{2}\Big( |t_{i+1}-s|^{2H}  -|t_{i+1}-t_i|^{2H}-|t_i-s|^{2H} \Big) \\
 &\le& \frac{1}{2}\left( |t_{i+1}-s|^{2H} -|t_i-s|^{2H}\right)\le \frac{1}{2}  |t_{i+1}-t_i|^{2H}\\
 &\le& |t_{i+1}-t_i|^{2H}=|t_{i+1}-t_i|^{\frac{1}{p}}.
\end{eqnarray*}
 As a consequence
 \begin{equation}
 \label{eq:prima_prop_primo_termine}
 \left|f^i\right|_{p-var,[ s,t_{i}]}^p=\sup_{\{u_j\}_j\in \mathcal{D}\left([s,t_{i}]\right)}\sum_j \Big| \mathbb E  \left[   W^H_{t_i,t_{i+1}}W^H_{u_j,u_{j+1}} \right]\Big|^p\le \left|t_{i+1}-t_i\right|.
 \end{equation}
 In the same way, one con prove that
 \begin{equation}
\label{eq:prima_prop_terzo_termine}
\left|f^i\right|_{p-var,[t_{i+1},t]}^p=\sup_{\{u_j\}_j\in \mathcal{D}\left([t_{i+1},t]\right)}\sum_j \Big| \mathbb E  \left[   W^H_{t_i,t_{i+1}}W^H_{u_j,u_{j+1}} \right]\Big|^p\le \left|t_{i+1}-t_i\right|.
\end{equation}	
	\smallskip
	Finally, by \eqref{eq: definition rho-variation},  \eqref{eq:prima_prop_1} and \eqref{eq:prima_prop_2}, one obtains 
	\begin{eqnarray*}
	 			\left(	V_{p}(K^H,[s,t]^2) \right)^p &=& \sup_{ \{t_i,t^\prime_j\}_{i,j}\in 	\mathcal{D}^2(R)   }   \sum_{i,j} \Bigg|\E{  W^H_{t_i,t_{i+1}}  W^H_{t_j,t_{j+1}}  }\Bigg|^p \\
	 			&\le&  \sup_{i}  \sum_i \left|f^i\right|_{p-var,[ s,t]}^p \\
	 			&\le&  3^{p-1} \sup_{\{t_i\}\in \mathcal{D}([s,t])}  \sum_i  \left(3 \left|t_{i+1}-t_i\right|\right)=3^p |t-s|, 
	\end{eqnarray*}
i.e.
 	\begin{eqnarray*}
 	V_{p}\left(K^H,[s,t]^2\right)   
 	&\le&  3  |t-s|^{\frac{1}{p}}= 3  |t-s|^{2H}.
 \end{eqnarray*}

\end{proof}

\medskip
  
The following theorem states the main result of the present work, that is the continuity of the solution of the equation \eqref{eq: SDE fBm rp} with respect to the Hurst parameter in the case  $H \in \left( 1/3,1/2\right]$.

\begin{theorem}	
	\label{th: Weak continuity RDE}
	
	Let $\{H_n\}_{n\in\mathbb N}\in \left( 1/3,1/2\right]^{\mathbb N}$ be a sequence of Hurst parameters and let $H_{\infty} \in \left(1/3, 1/2 \right]$. Let   $\{W^{H_n}\}_{n\in H}$  be a family of fractional Brownian motions, each of them independent upon a random variable $X_0\in L^2(\Omega)$. 
For any $n\in \mathbb N,$ let us denote by $X_n^{X_0}$ the solution to the equation \eqref{eq: SDE fBm rp} with $H = H_n$, for $t\in[0,1]$ and with initial condition $X_0$. Suppose that $\mu, \sigma \in C_b^3(\mathbb R)$. 
 Then, the sequence $\left\{X_n^{X_0}\right\}_{n\in \mathbb N}$ converges to $X_{\infty}^{X_0} $ in distribution in the space $C^{\frac{1}{3}} ([0, 1])$.
\end{theorem}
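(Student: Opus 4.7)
The strategy is to combine the continuity of the solution operator with a tightness-plus-identification argument at the level of the rough lift $\mathbf{W}^{H}=(W^H,\mathbb{W}^H)$.

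Fix $\alpha\in(1/3,H_\infty)$; since $H_n\to H_\infty$, we have $\alpha<H_n$ for all $n$ sufficiently large, so by Theorem~\ref{teo: continuity solution map} every $X_n^{X_0}$ is well defined and the solution map $(x_0,\mathbf{W})\mapsto S(x_0,\mathbf{W})$ is continuous from $\mathbb R\times\mathscr{C}^\alpha$ to $\mathcal{C}^\alpha([0,1])$. Because $X_0$ is independent of each $W^{H_n}$ and does not depend on $n$, any weak convergence $\mathbf{W}^{H_n}\xrightarrow{d}\mathbf{W}^{H_\infty}$ in $\mathscr{C}^\alpha$ automatically upgrades to joint convergence $(X_0,\mathbf{W}^{H_n})\xrightarrow{d}(X_0,\mathbf{W}^{H_\infty})$ in $\mathbb R\times\mathscr{C}^\alpha$; the continuous mapping theorem then yields $X_n^{X_0}\xrightarrow{d}X_\infty^{X_0}$ in $\mathcal{C}^\alpha([0,1])$, and the continuous embedding $\mathcal{C}^\alpha\hookrightarrow\mathcal{C}^{1/3}$ produces the stated convergence. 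Hence the task reduces to proving $\mathbf{W}^{H_n}\xrightarrow{d}\mathbf{W}^{H_\infty}$ in $\mathscr{C}^\alpha$.

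\textbf{Tightness.} I would apply Proposition~\ref{prop: p-variation fBm}, which gives $V_{1/(2H_n)}(K^{H_n},[s,t]^2)\le 3|t-s|^{2H_n}$, together with the $\varepsilon$-close inequality \eqref{eq: epsilon close}. Setting $\rho_n:=1/(2H_n)+\varepsilon$, the strict inequality $H_\infty>1/3$ permits choosing $\varepsilon$ so small that $\rho_n<2$ uniformly in $n$ large. One then obtains
\[
|K^{H_n}|_{\rho_n\text{-var},[s,t]^2}\le 3\,C\!\left(\tfrac{1}{2H_n},\varepsilon\right)|t-s|^{2H_n},
\]
and by Remark~\ref{remark:costant_C} the constants $C(1/(2H_n),\varepsilon)$ stay bounded in $n$. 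Then $\omega_n(R):=|K^{H_n}|_{\rho_n\text{-var},R}^{\rho_n}$ is a 2D control fulfilling the hypothesis of Theorem~\ref{teo: 15.33}, whose conclusion delivers
\[
\E{\bigl(|W^{H_n}_{s,t}|+|\mathbb{W}^{H_n}_{s,t}|^{1/2}\bigr)^q}^{1/q}\le K_q\,|t-s|^{H_n}
\]
with $K_q$ independent of $n$. A Kolmogorov-type tightness criterion for rough paths, applied with $q$ large enough that $H_n-1/q>\alpha$ uniformly in $n$, then yields tightness of $\{\mathbf{W}^{H_n}\}$ in $\mathscr{C}^\alpha$.

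\textbf{Identification of the limit.} Along any weakly convergent subsequence $\mathbf{W}^{H_{n_k}}\xrightarrow{d}\mathbf{W}^*$, the first-level component is centered Gaussian with covariance $K^{H_{n_k}}$, which by \eqref{eq:covariance_fBm} converges pointwise to $K^{H_\infty}$; hence the first level of $\mathbf{W}^*$ has the law of $W^{H_\infty}$. In the scalar setting of the theorem the geometric property \eqref{eq: geometric rough path condition} forces $\mathbb{W}^{H_n}_{s,t}=\frac{1}{2}(W^{H_n}_t-W^{H_n}_s)^2$, so the second level is a continuous functional of the first and the limit is necessarily $\mathbb{W}^{H_\infty}$. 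The subsequential limit being unique, the whole sequence converges and the reduction above concludes the proof.

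\textbf{Main obstacle.} The delicate step is precisely securing the uniformity of the moment estimate in $n$: the parameter $\rho_n$ must stay strictly below $2$ (which is exactly why the hypothesis $H_\infty>1/3$ is essential, ruling out the boundary case), and the multiplicative constant $C(1/(2H_n),\varepsilon)$ must remain bounded along the sequence (which is exactly the content of Remark~\ref{remark:costant_C}). Once these uniform controls are in place, the classical tightness-plus-identification scheme closes the argument cleanly.
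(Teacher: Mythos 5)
Your proposal is correct and follows the paper's overall skeleton (reduce to weak convergence of the lifts via the continuity of the solution map; prove tightness via the uniform $p$-variation estimates of Proposition~\ref{prop: p-variation fBm}, the $\varepsilon$-close inequality \eqref{eq: epsilon close} with the uniform constant of Remark~\ref{remark:costant_C}, the Gaussian moment bound of Theorem~\ref{teo: 15.33}, and a Kolmogorov--Lamperti criterion; then identify the limit). The genuine divergence is in the identification step. The paper does \emph{not} use the algebraic identity $\mathbb{W}^{H}_{s,t}=\tfrac{1}{2}(W^{H}_t-W^{H}_s)^2$: instead it proves convergence of the finite-dimensional distributions of the second level by introducing piecewise linear approximations $S_3(W^{H_n},D)_{s,k}$ and invoking a quantitative, uniform-in-$n$ error bound (Theorem 15.42 of Friz--Victoir, fed by the uniform control $\sup_n\omega_{H_n}([0,1]^2)\le M_1$), followed by a three-term triangle inequality against Lipschitz test functions. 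Your shortcut --- observing that in the scalar geometric setting the second level is a continuous functional of the first, so that the limit of any convergent subsequence is forced to be $\mathbf{W}^{H_\infty}$ by the continuous mapping theorem --- is simpler and perfectly valid in the one-dimensional setting the paper actually works in (it explicitly defines $\mathbb{W}^H_{s,t}=\tfrac12(X_t-X_s)^2$), but it does not survive the passage to $d>1$, where the antisymmetric part of $\mathbb{W}$ (the L\'evy area) is not a function of the path and the approximation argument becomes essential; the paper's route buys that generality. Two further remarks: your explicit treatment of the random initial condition $X_0$ via independence and joint weak convergence is a point the paper's proof silently skips, and is a welcome addition; and your condition ``$\rho_n<2$'' only guarantees applicability of Theorem~\ref{teo: 15.33} --- the tightness in $\mathscr{C}^\alpha$ with $\alpha>1/3$ additionally needs the H\"older exponent in the moment bound to exceed $\alpha+1/q$ uniformly, which you do secure a line later through the bound $K_q|t-s|^{H_n}$ together with $H_n\to H_\infty>\alpha$, so no gap results.
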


\begin{proof}  Given the continuity of the solution map stated in Theorem \ref{teo: continuity solution map}, we have only to show that   
\begin{equation}\label{eq:convergence_W_lifted}
\mathbf{W}^{H_n}=(W_\tau^{H_n},\mathbb{W}^{H_n}_{s,t}) \xrightarrow{n\to \infty}\mathbf{W}^{H_\infty}= (W_\tau^{H_\infty},\mathbb{W}^{H_\infty}_{s,t}),
\end{equation} 
	in $\mathscr{C}^{\frac{1}{3}}([0,1])$. 
	
	\bigskip

	 The first step is to prove the tightness of $\left\{\mathbf{W}^{H_n}\right\}_n$. The thesis follows by the Kolmogorov-Lamberti criterion   (see  \cite{Friz book}, Corollary A.11),  if we establish that  there exist constants $M>0, q>r> 1$ such that $\frac{1}{r}-\frac{1}{q}>\frac{1}{3}$ and, moreover, that the following estimate holds
	 \begin{equation}
	 \label{eq:kolmogorov_lamperti_criterion}
	 \sup_{n\in \mathbb{N}}\E{d(\mathbf{W}^{H_n}_t,\mathbf{W}^{H_n}_s)^q}^{\frac{1}{q}} \le M \left|t-1\right|^{1/r}.
	 \end{equation}
\medskip

Let us denote  for any $n\in \mathbb N$   $p_n= \frac{1}{2 H_n} \in [1, 3/2). $  Using the hypothesis  $H_n \rightarrow H_{\infty}>\frac{1}{3}$, there exist a $\delta>0$ and an $n_0(\delta)\in \mathbb N$ such that $H_n>\frac{1}{3}+\delta$  for any $n>n(\delta)$. Defining the following constants
\begin{equation}
\label{eq:rho_theo}
 \rho:=\sup_{n\geq n_0(\delta)} p_n <\frac{3}{2},  \quad \varepsilon_n=\rho+\epsilon -p_n >0,
\end{equation}
where $0<\varepsilon<\frac{3}{2}-\frac{3}{2(1+3\delta)}$ is some fixed real number, 
for any $n\in \mathbb N$ and  any $R \in [0,T]^2$, by  \eqref{eq: epsilon close} it holds that
\begin{equation}\label{eq:rho+epsilon_var_finite}
\begin{split}
\left|K^{H_n}\right|_{\rho+\varepsilon\text{-var},R}=
\left|K^{H_n}\right|_{p_n+\varepsilon_n\text{-var},R}&\leq C\Big(p_n,\varepsilon_n\Big) V_{p_n}(K^{H_n},R) \\
&\leq \overline{C}  V_{p_n}(K^{H_n},R) < \infty.
\end{split}
\end{equation} 
where $\bar{C}=\sup_{n>n_0(\delta)}C\Big(p_n,\varepsilon_n\Big)$, which is finite since $p_n<\rho$, $\epsilon_n>\epsilon$
and by Remark \ref{remark:costant_C} $C(p,\tilde{\epsilon})$,  is a continuous function for $p<\rho$ and $\tilde{\epsilon}>\epsilon>0$. Furthermore the boundedness of the $p_n$-variation $ V_{p_n}(K^{H_n},R)$ is guaranteed by Proposition \ref{prop: p-variation fBm}.

 \medskip

For any $n>n(\delta)$  the  2D control $$\omega_{H_n}:=|K^{H_n}|^{\rho+ {\varepsilon}}_{\rho+\varepsilon\text{-var},R}.$$ 
 is an H\"older dominated control, uniformly in $n>n(\delta)$. Indeed   by \eqref{eq:rho+epsilon_var_finite} 
\begin{equation}
\label{eq: holder bound omega}
\begin{split}
\omega_{H_n}([s,t]^2)&\leq \overline{C}^{\rho+\varepsilon}  V_{p_n}\left(K^{H_n},[s,t]^2\right)^{\rho+\varepsilon}\\&\le  \left(3 \overline{C}\right)^{\rho+\varepsilon} \left| t-s \right| ^{\frac{\rho+\varepsilon}{p_n}}\\ &\le C|t-s|.
\end{split}
\end{equation}
The last two inequalities are due to  Proposition  \ref{prop: p-variation fBm}, to the fact   that   $ (\rho+{\varepsilon})\backslash p_n>1$ together with the assumption  $|t-s|\leq 1$. \\
In particular we obtain that
\begin{equation}
\label{eq: holder bound omega_01}
\begin{split}
\omega_{H_n}([0,T]^2)&\leq M_1.
\end{split}
\end{equation}

Moreover, since for any $n> n(\delta)$, $|K^{H_n}|_{(\rho+\epsilon)\text{-var},R}\le \omega_{H_n}(R) $,  condition \eqref{eq:var_K_dominated_omega} in  Theorem  \ref{teo: 15.33} is satisfied and so by \eqref{eq: estimate 15.33} and \eqref{eq: holder bound omega} we obtain that there exists a constant $\widetilde{C}=\widetilde{C}(\rho+\epsilon)$ such that for every $q\in [1,\infty)$ and for every $s,t\in[0,1]$
	\begin{equation} \label{eq:supE_inequality}
	\begin{split}
	\sup_{n\in \mathbb{N}} \E{ d(\mathbf{W}^{H_n}_t,\mathbf{W}^{H_n}_s)^q}^{\frac{1}{q}}
	\leq & \sup_{n\in \mathbb{N}} \left[\widetilde{C} \sqrt{q} \,\, \omega_{H_n}\left([s,t]^2\right)^{\frac{1}{(\rho+\epsilon)}} \right]\\
	\leq &  \widetilde{C}\sqrt{q} \sup_{n\in \mathbb{N}} |t-s|^{\frac{1}{(\rho+\epsilon)}}.
	\end{split}
	\end{equation}
	Since there exists $\epsilon$ such that ${\rho}+ \epsilon \in (1,3/2)$, then $\frac{1}{r}=\frac{1}{(\rho+\epsilon)} \in (1/3,1/2).$ As a consequence, by choosing $q>>1$ such that  $\frac{1}{r}-\frac{1}{q}>\frac{1}{3}$ and defining $M=\widetilde{C}\sqrt{q}$, from \eqref{eq:supE_inequality} we finally get the inequality \eqref{eq:kolmogorov_lamperti_criterion}.  Kolmogorov-Lamperti tightness criterion implies that the sequence $\mathbf{W}^{H_n}$ is tight in $\mathscr{C}^{\frac{1}{3}}$, and thus it possesses a subsequence converging to some limit $\mathbf{Y}$.
	
	\medskip
	
The last step is in the identification of the limit $\mathbf{Y}$ as $\mathbf{W}^{H_\infty}$. 
\medskip

Once the tightness has been achieved, we only need to show that the finitely dimensional distributions of $\mathbf{W}^{H_n}$ converge to the ones of $\mathbf{W}^{H_\infty}$, i.e. for any $m,\ell \in \mathbb N $, and for any increasing times $\{t_j\}_{j=1}^m,$  $\{s_i\}_{i=1}^\ell$, $\{k_i\}_{i=1}^\ell$, such that,  for any $i =1,\ldots,\ell$,   $s_i \leq k_i$,   we have that  $\left(W^{H_n}_{t_1},\dots,W^{H_n}_{t_m}, \right.$ $\left.\mathbb{W}^{H_n}_{s_1,k_1},\dots,\mathbb{W}^{H_n}_{s_{\ell},k_{\ell}}\right)$ converges in probability to  $\left(W^{H_{\infty}}_{t_1},\dots,W^{H_{\infty}}_{t_m}, \mathbb{W}^{H_{\infty}}_{s_1,k_1},\right.$ $\left.\dots,\mathbb{W}^{H_{\infty}}_{s_{\ell},k_{\ell}}\right)$. 

	First of all we note that since $W^n=\{W^{H_n}\}_n$ is a Gaussian process for any $n\in \mathbb N$ and the covariance of $W^n$ converges to the covariance of $W^{\infty}$, the finitely dimensional distributions $\left(W^{H_n}_{t_1},\ldots,W^{H_n}_{t_m}\right)$ converge in probability to $\left(W^{H_{\infty}}_{t_1},\ldots, W^{H_{\infty}}_{t_m}\right)$.
As far as concern the convergence of $\left(\mathbb{W}^{H_n}_{s_1,k_1},\dots,\mathbb{W}^{H_n}_{s_{\ell},k_{\ell}}\right)$  we  prove the one dimensional case, i.e. that, given $s \leq k \in [0,1]$,  the random variable $\mathbb{W}^{H_n}_{s,k}$ converges to $\mathbb{W}^{H_{\infty}}_{s,k}$. The general case is a  straightforward generalization. 
	
	\bigskip
	
 Let $D$ be a partition of $[0,1]$ having width $\delta_D$ and let $S_3\left(W^{H_n},D\right)_{s,k}$ be the piecewise linear approximations of $\mathbb{W}_{s,k}^{H_n}$. By \eqref{eq: holder bound omega_01} we obtain that $\sup_{n\in\mathbb{N}}\omega_{H_n}([0,1]^2)\le M_1<+\infty$; hence,  by Theorem 15.42 in \cite{Friz book}, fixed an arbitrary  $p\in \left(2(\rho+\epsilon), 4\right)$, for any  $\eta\in (0, \frac{1}{2({\rho}+\epsilon)}-\frac{1}{p})$,   there exists a constant $C_1({\rho}+\epsilon,p,M_1,\eta)$ such that 
	$$\mathbb{E}\left[\left\|S_3(W^{H_n},D)_{s,k}- \mathbb{W}_{s,k}^{H_n}\right\|_{L^q}^q\right]\leq C_1 C \, \sqrt{q} \,\delta_D^{\eta/3}. $$

	Let $F$ be a $C^1$ bounded function which is globally Lipschitz with Lipschitz constant $L$, then we have 
\begin{equation}
\label{eq:control_expected_value}
\left|\mathbb{E}\left[F(\mathbb{W}_{s,k}^{H_n})\right]-\mathbb{E}\left[F(S_3(W^{H_n},D)_{s,k})\right]\right|\leq K C_1\, C \,\delta_D^{\eta/3}.
\end{equation}
		From \eqref{eq:control_expected_value} we get
	\begin{equation}
	\begin{split}
\limsup_{n \rightarrow +\infty}\left|\mathbb{E}\left[F(\mathbb{W}_{s,k}^{H_n})\right]-
\mathbb{E}\left[F(\mathbb{W}_{s,k}^{H_{\infty}})\right] \right|\leq & \limsup_{n \rightarrow +\infty}
\left|\mathbb{E}\left[F(\mathbb{W}_{s,k}^{H_n})\right]-
\mathbb{E}\left[F(S_3(W^{H_n},D)_{s,k})\right] \right|\\
&+\limsup_{n \rightarrow +\infty}
\left|\mathbb{E}\left[F(\mathbb{W}_{s,k}^{H_{\infty}})\right]-
\mathbb{E}\left[F(S_3(W^{H_{\infty}},D)_{s,k})\right] \right|\\
&+\limsup_{n \rightarrow +\infty}
\left| \mathbb{E}\left[F(S_3(W^{H_n},D)_{s,k})\right]\right.\\
&\hspace{1.5cm}\, -\left.
\mathbb{E}\left[F(S_3(W^{H_{\infty}},D)_{s,k})\right] \right|\\
\leq & 2  L C_1 C \, \delta_D^{\eta/3}.
	\end{split}
	\end{equation}
The last step derived by the facts that $W^{H_n}$ converges in probability
	to $W^{H_{\infty}}$  and   that $S_3(W^{H_{n}},D)_{s,k}$ and $S_3(W^{H_{\infty}},D)_{s,k}$ are polynomial approximations of $W^{H_n}$ and  $W^{H_{\infty}}$, respectively. Since $\delta_D$ can be chosen in an arbitrary way the thesis is proven.

\end{proof}

\begin{remark}
Note that the restriction to  $t\in [0,1]$ is only a technical simplification, since one can always reformulate an equation on $[0,T]$ as an equation on $[0,1]$ via a reparametrization (see \cite{Friz book}). 
\end{remark}

\begin{remark}
	When $H>\frac 12$, the result can be proven following the same steps, but without the need of rough paths theory. Indeed, the solution map $W^H\to X^H$ is continuous, since we are in the framework of Young integration theory. This means that whenever $H> \frac 12$ it is sufficient to show that, for some $\alpha>\frac 12$,  when $H\to H_\infty$ it holds that $W^H\to W^{H_\infty}$ in $\C^{\alpha}([0,T])$. Latter fact can be shown again via Kolmogorov-Lamperti criterion (Corollary A.11, \cite{Friz book}). 
\end{remark}

\end{document}